\documentclass[12pt,reqno]{article}

\usepackage[usenames]{color}
\usepackage{amssymb}
\usepackage{graphicx}
\usepackage{amscd}

\usepackage[colorlinks=true,
linkcolor=webgreen,
filecolor=webbrown,
citecolor=webgreen]{hyperref}

\definecolor{webgreen}{rgb}{0,.5,0}
\definecolor{webbrown}{rgb}{.6,0,0}

\usepackage{color}
\usepackage{fullpage}
\usepackage{float}

\usepackage{graphics,amsmath,amssymb}
\usepackage{amsthm}
\usepackage{amsfonts}
\usepackage{latexsym}
\usepackage{epsf}

\usepackage{verbatim}
\usepackage{enumerate}

\usepackage{tikz}
\usepackage{pgfplots}
\pgfplotsset{width=10cm,height=10cm,compat=1.9}

\usepackage[blocks]{authblk}

\usepackage{algpseudocode}

\theoremstyle{plain}
\newtheorem{theorem}{Theorem}

\newtheorem{lemma}[theorem]{Lemma}
\newtheorem{proposition}[theorem]{Proposition}

\theoremstyle{definition}

\newtheorem{example}[theorem]{Example}

\theoremstyle{remark}

\begin{document}

\title{Card Games Unveiled: Exploring the Underlying Linear Algebra}
\author{Nikhil Byrapuram}
\author{Hwiseo (Irene) Choi}
\author{Adam Ge}
\author{Selena Ge}
\author{Sylvia Zia Lee}
\author{Evin Liang}
\author{Rajarshi Mandal}
\author{Aika Oki}
\author{Daniel Wu}
\author{Michael Yang}
\affil{PRIMES STEP}
\author{Tanya Khovanova}
\affil{MIT}
%\date

\maketitle

\begin{abstract}
We discuss four famous card games that can help learn linear algebra. The games are: SET, Socks, Spot it!, and EvenQuads. We describe the game in the language of vector, affine, and projective spaces. We also show how these games are connected to each other. A separate section is devoted to playing Socks with the EvenQuads deck and vice versa.
\end{abstract}

\section{The game of SET}

The game of SET is a trendy card game with a lot of mathematics hidden. A lot of cool properties of the game of SET are discussed in the wonderful book, \textit{The Joy of SET} by McMahon et al.~\cite{MGGG}.

\subsection{The cards and the game}

The game of SET is played with a deck of 81 cards. Each card is characterized by 4 attributes:
\begin{itemize}
\item Number: 1, 2, or 3 symbols.
\item Color: Green, red, or purple.
\item Shading: Empty, striped, or solid.
\item Shape: Oval, diamond, or squiggle.
\end{itemize}

A \textit{set} is formed by three cards that are either all the same or all different in each attribute. Figure~\ref{fig:setex} shows an example of a set, where for each attribute, all values are different.

\begin{figure}[ht!]
\begin{center}
\includegraphics[scale=0.25]{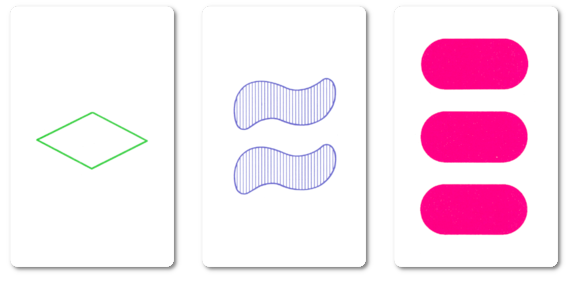}
\end{center}
\caption{An example of a set.}
\label{fig:setex}
\end{figure}

In the game of SET, the dealer lays out 12 cards. Then, without touching the cards, players look for three cards that form a set. As soon as the players find a set, they have to yell out ``set,'' and whoever says it first gets to take the three cards, as long as they are correct. If the three cards they chose do not form a set, they have to put the cards back. If players cannot find a set after a while, the dealer may lay out additional three cards. The game ends when the deck runs out, and there are no sets among the remaining cards on the table. The winner of the game is the one with the most sets.

\subsection{Interesting properties of the game of SET}

The cards in the deck are all different, and all possible cards are present. It follows that there are $3^4 = 81$ cards in a deck: for each of the four attributes, there are three possible values. 

We can check that, given two cards, a unique third card exists that completes it into a set. We can calculate this card by separately calculating the value for each attribute. If the given two cards have the same value for an attribute, then the third card must have the same value as the other two. If the two cards have different values for an attribute, the third card must have a value different from the other two. This rule uniquely determines the third card to complete a set.

We know that this third card is not a repeat card: if it were the same as one of the two cards, it would also be the same as the second card, which is a contradiction because we assumed the two starting cards are distinct.

Given three cards, it is easy to calculate the probability of these cards forming a set. Suppose we picked two cards at random. To complete them into a set, we need to choose one specific card out of 79 cards that are left over. Thus, the probability is $\frac{1}{79}$.

\subsection{Nosets and packed sets}
\label{sec:nosets}

We call a set of cards a \textit{noset} if it doesn't contain a set. It is known that the maximum size of a noset of a standard deck of set cards is 20. We can view the SET deck as a vector space $\mathbb{Z}_3^4$. We can naturally extend the noset question to the space $\mathbb{Z}_3^n$. In such a space, nosets are also called caps. Caps in $\mathbb{Z}_3^n$ have been studied in great detail. In particular, finding the maximal size of a cap has been the subject of many papers, and this problem is known as the ``Cap Set'' problem. It has been solved for $n\leq6$, and there are bounds for higher values, see a recent survey by Grochow \cite{Grochow}.

%We can have a set deck with any number of attributes. Sequence A090245 describes the maximum number of cards with no SET in an $n$-attribute version of the SET card game. The values for this sequence are known for the number of attributes between 0 and 6 inclusive: 
%\[1,\ 2,\ 4,\ 9,\ 20,\ 45,\ 112.\]

We can define a \textit{packed set} for the game of SET as a set of $n$ cards with the maximum possible number of sets. The nosets and packed sets are deeply connected. This connection can be explained through complementary sets.

We call two sets of cards \textit{complementary piles} if they are disjoint sets and their union is a full deck. Suppose we have two complementary sets: $A$ and $B$. Suppose the number of sets within $A$ is $S_A$, and the number of sets within $B$ is $S_B$. We also denote the number of cards in piles $A$ and $B$ as $|A|$ and $|B|$, respectively. 

The proof of the following proposition is done through straightforward set counting and is left to the reader.

\begin{proposition}
Suppose $A$ and $B$ are complementary piles in a deck of set cards. Then
\[S_A + S_B = \frac{|A|^2-81|A|+2160}{2} = \frac{1}{2}(2160-|A|\cdot |B|).\]
\end{proposition}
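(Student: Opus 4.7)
My plan is to prove this by double counting pairs of cards, exploiting the fact from the previous subsection that every pair of cards lies in a unique set.

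First I would compute the total number of sets in the full deck. Since every pair of cards determines a unique set, and each set contains $\binom{3}{2} = 3$ pairs, the total number of sets equals $\binom{81}{2}/3 = 1080$. Next, I would classify each of these 1080 sets by how its three cards split between $A$ and $B$: either all three are in $A$ (counted by $S_A$), all three are in $B$ (counted by $S_B$), two are in $A$ and one in $B$ (call this count $M_1$), or one in $A$ and two in $B$ (call this count $M_2$). This gives the identity
\[
S_A + S_B + M_1 + M_2 = 1080.
\]

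Now I would double count pairs contained in $A$. On one hand there are $\binom{|A|}{2}$ such pairs. On the other hand, each pair lies in a unique set, and that set either is entirely in $A$ (in which case it contributes three pairs to this count) or is of type $M_1$ (contributing one pair). Hence
\[
\binom{|A|}{2} = 3 S_A + M_1,
\]
and symmetrically $\binom{|B|}{2} = 3 S_B + M_2$. Adding these and substituting $M_1 + M_2 = 1080 - S_A - S_B$ from the previous paragraph yields
\[
2(S_A + S_B) = \binom{|A|}{2} + \binom{|B|}{2} - 1080.
\]

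The remaining work is routine algebra. Using $|B| = 81 - |A|$, I would expand $\binom{|A|}{2} + \binom{|B|}{2}$ to $|A|^2 - 81|A| + 3240$, so that $2(S_A+S_B) = |A|^2 - 81|A| + 2160$, giving the first form. For the second form I would use the identity $|A|^2 - 81|A| = |A|(|A| - 81) = -|A| \cdot |B|$. There is no real obstacle here; the only subtlety is to be careful about the "unique completing card" property (already established earlier in the excerpt) when asserting that each pair lies in exactly one set, since this is what makes the double count work cleanly.
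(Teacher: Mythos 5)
Your double-counting argument is correct, and it is precisely the ``straightforward set counting'' that the paper alludes to before leaving the proof to the reader: every pair lies in a unique set, so $\binom{81}{2}/3 = 1080$ total sets, $\binom{|A|}{2} = 3S_A + M_1$, $\binom{|B|}{2} = 3S_B + M_2$, and the algebra all check out. Nothing to add.
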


The second form of the sum shows us that it is symmetric for $A$ and $B$ (they can be swapped without changing anything). The point of the formula is that the sum of the number of sets in $A$ and $B$ depends only on their sizes. For example, the maximum number of sets in $A$ is achieved when the number of sets in $B$ is minimum, and vice versa.

This formula allows us to calculate some values. In particular, if pile $A$ is packed, then pile $B$ has the fewest number of sets possible. On the other hand, if pile $A$ is noset, then pile $B$ is packed. 

For example, suppose set $A$ has 27 cards. Then the maximum number of sets is achieved if the cards form a full deck with $3$ attributes. Indeed, for a full deck, any two cards can be completed into a set, and this is the best we can do. The number of sets in this case is $\frac{\binom{3^3}{2}}{3} = 117$. That means the minimum number of sets for a complementary pile of 54 cards is
\[\frac{1}{2}(2160-27\cdot54)-117=234.\]

As another example, we can view a pile of 61 cards to be a complement of a pile of 20. Since a pile of 20 has at minimum 0 sets, the maximum number of sets in a pile of 61 is:
\[\frac{2160-20\cdot61}{2}=\frac{940}{2}=470.\]

\subsection{Connection to linear algebra}

We can assign values 0, 1, and 2 for each attribute and consider each attribute as a coordinate. Thus we can view each card as an element in the vector field $\mathbb{Z}_3^4$. Consider the property of a set where three cards need to have all the same value or be all different. That means the values can be in one of the following sets $\{0,0,0\}$, $\{1,1,1\}$, $\{2,2,2\}$, and $\{0,1,2\}$. Equivalently, we can say that the values of one attribute in a set sum to 0 modulo 3.

Consider three cards $a$, $b$, and $c$ in the deck. We can view them as distinct vectors $\vec{a}$, $\vec{b}$, and $\vec{c}$ in $\mathbb{Z}_3^4$. Then $\{a, b, c\}$ form a set if and only if $\vec{a} + \vec{b} + \vec{c} = \vec{0}$. Consider two vectors  $\vec{a}$ and $\vec{b}$. As in Euclidean geometry, exactly one line passes through the endpoints of these vectors. What other points are on this line? Suppose $\vec{x}$ is on the same line, then $\vec{x} - \vec{a}$ should be proportional to $\vec{b} - \vec{a}$. That means $\vec{x} = \vec{a} + m(\vec{b} - \vec{a})$. But $m$ can take only three different possible values modulo 3. Plugging in $m$ equal 0, 1, or 2, we get three possible points on the line: $\vec{a}$, $\vec{b}$, and $-\vec{a} + 2\vec{b} = -\vec{a} - \vec{b}$. It follows that there are only three points on any line, and their sum is $\vec{0}$, which means they correspond to a set.

When we view the deck of SET cards as a vector space, we assign one of the cards as the origin. If we want to emphasize that all cards are equal and interchangeable, we can view a deck of SET cards as a finite affine geometry $AG(4,3)$.

\section{The Socks Game}

After the game of SET appeared, it became very popular. People tried different variations, and the following variation, named Projective SET, soon became well-liked. As far as we know, initially, people would make cards by hand. Then this game became available for sale as the game of Socks.

\subsection{The cards and the game}

The deck consists of cards featuring six colored socks, each containing a non-empty subset of 6 socks with different colors. Socks of a particular color appear in a fixed position on the card. We describe the color in the card's order: red, blue, green, pink, purple, and yellow. All cards are distinct, and there is no empty card.

A \textit{match} is a set of several cards with an even number of each color of socks. For example, a matched set of cards could have two blue, two green, two pink, and four purple socks. Figure~\ref{fig:socksmatch} shows a matched set of three cards where there are two of each color except pink.

\begin{figure}[ht!]
\begin{center}
\includegraphics[scale=0.45]{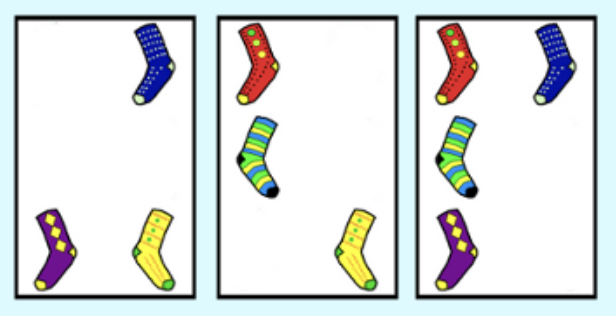}
\end{center}
\caption{A matched set of socks.}
\label{fig:socksmatch}
\end{figure}

The game is played similarly to SET. The dealer lays out 9 cards, and the players look for a match. As soon as the player finds one, they must say ``Match!''. If their chosen cards are indeed a match, the player takes the cards and gets one point per card. If incorrect, they lose a point and must put the cards back. If there is no match on the table, the dealer may lay out additional cards. The game ends when there are no more cards left on the table. The player with the most points wins.

The official rules in the game of socks are slightly different. The rules require finding a set of THREE matched cards. In this case, there are always exactly two of each kind of sock. Moreover, the official game requires 12 cards on the table. This game is too easy. So we played a more difficult version described above.

\subsection{Some interesting properties}

The deck has 63 distinct cards, meaning all possible sock combinations exist. Indeed, there are 2 ways to decide whether the color should be on the card or not for each color. Which gives 64 possible combinations of all colors. After removing an empty card, we get 63 cards left.

For any set of socks cards that is not matched, we can calculate 1 card to complete it to a matched set of socks. We need zero or one more for every type of sock to make the total even. Thus, we know the set of socks that is needed for completion. The calculated card may already be in our set. In this case, removing this card from the set makes the leftover set of cards match. Thus, for any set of socks cards that is not matched, we can either add or remove one card to make it a matched set. If the cards are matched, the calculation for the card to complete it produces an empty card. 

If we play the version of socks with matched sets of any size, after the last card is dealt, whatever is on the table has to be a matched set. This is because the whole deck has an even number of each type of sock; more precisely, there are 32 socks of each type in the whole deck. When the players remove matched sets, they remove an even number of each type of sock. Thus, the remaining cards have an even number of each type of sock, forming a matched set. The player who understands this property can yell ``Match!'' right after the last card is dealt. Such a player is guaranteed a matched set at the end as long as everyone has played correctly.

The official rules are easier for many reasons. One of them is that we only need to check triples of cards. Actually, we only need to check pairs of cards. For every pair of cards, we can calculate the 1 card that completes it into a matched set and see if this card is on the table. For a matched set of three cards and each type of sock, there could be 0 or 2 of this type of sock in the set.

However, when playing by official rules, there is no guarantee that all cards will be matched at the end of the game. Once, we played such a game and finished with six cards on the table, each with a blue sock on. We leave it to the reader to show that such a set can't be divided into two matched sets of three cards.

\subsection{Connection to linear algebra}

Every card can be described by a string of length six consisting of zeros and ones. The $i$th character corresponds to the $i$th type of socks. Zero means that the corresponding sock is not on the card, and one means it is there. All combinations of zeros and ones are possible except when all characters are zero. This is because the deck doesn't contain an empty card. Thus, the deck can be viewed as vector space $\mathbb{Z}_2^6$ with the origin removed.

Bitwise XOR is the equivalent of binary addition for each type of sock, where 1 represents the existence of a sock, and 0 represents no sock. The cards that form a matched set are exactly the vectors that sum to 0. Equivalently, these are the vectors whose bitwise XOR produces the zero vector.

In vector space $\mathbb{Z}_2^6$, each line that passes through zero has exactly one more point on it. That means we can view the cards as the space of lines passing through zero or, in other words, as a projective space.

\subsection{Compare to SET}

In the original game, only three cards are allowed as a matched set. Together with the empty card, they form a plane in the underlying vector space. In the projective space, they form a line. This is very similar to the game of SET, where three cards form a line too. No wonder this game is often referred to as a projective SET.

\subsection{Probability of a matched set of socks}

We saw that the probability of three cards in the game of SET forming a set was very easy to calculate. Not so for the game of socks. Here we calculate the probability that $n$ cards form a matched set of socks.

Let $P(n)$ be the probability of getting a matched set from $n$ cards. We know that $P(0) = P(1) = P(2) = 0$. For 3 cards, we can pick 2 cards, and only one possible card out of 61 remaining cards can complete them to a matched set. So $P(3)=1/61$. In addition, the entire deck is a matched set of socks: $P(63) = 1$. By symmetry, we have $P(n) = P(63-n)$. Indeed, the 63 cards in a Socks deck form a matched set of socks. Let us take a subset of $n$ cards. The complementary subset of $63-n$ is matched if and only if our subset is matched.

Now we find $P(n+1)$ recursively. First, we select $n$ cards. The probability that they are already a matched set is $P(n)$. If not, exactly one card will form a matched set when added to the set of $n$ cards. However, it might be a repeat of a card we already have. The probability that this is the case is $nP(n-1)$. Therefore, 
\[P(n+1)=\frac{1-P(n) -nP(n-1)}{63-n}.\]

For example, $P(4) = \frac{1-P(3) - 3P(2)}{63-3} = \frac{1-\frac{1}{61} - 0}{60} = \frac{1}{61}$.

The probabilities follow the following interesting pattern.

\begin{theorem}
We have $P(2n) = P(2n-1)$.
\end{theorem}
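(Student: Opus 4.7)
The plan is to prove the identity by induction on $n$, using the recurrence
\[P(n+1) = \frac{1 - P(n) - nP(n-1)}{63-n}\]
that was just established before the theorem. The base case is $P(2) = P(1) = 0$: a single nonempty card has odd total parity in some coordinate, so it is never matched, and two distinct nonzero vectors in $\mathbb{Z}_2^6$ cannot XOR to $\vec{0}$ either.

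For the inductive step, I would assume $P(2m) = P(2m-1)$ and prove $P(2m+2) = P(2m+1)$. First I apply the recurrence with $n = 2m$ and substitute the inductive hypothesis $P(2m-1) = P(2m)$ to collapse $P(2m) + 2m\,P(2m-1)$ into $(2m+1)P(2m)$. This gives the clean two-term relation
\[(63 - 2m)\, P(2m+1) \;=\; 1 - (2m+1)\, P(2m).\]
Then I apply the recurrence a second time, now with $n = 2m+1$, to express $P(2m+2)$, and plug the displayed identity into the numerator to eliminate the $1 - (2m+1)P(2m)$ combination. The two denominators $63 - 2m$ and $62 - 2m$ differ by exactly $1$, and the extra subtracted copy of $P(2m+1)$ in the numerator of the recurrence accounts for precisely that difference, so everything collapses to $P(2m+2) = P(2m+1)$, closing the induction.

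The main obstacle, such as it is, is purely strategic rather than computational: one must resist applying the recurrence twice and simplifying blindly, and instead use the inductive hypothesis \emph{after} the first application to produce the two-term identity above. Once that intermediate identity is in hand, the second application of the recurrence matches it termwise, and no further manipulation is required. I briefly considered a combinatorial route via the complementary-set symmetry $P(n) = P(63-n)$, but since $63$ is odd this swaps parities rather than fixing one, so it does not immediately produce the claimed equality; the recurrence-based induction is the cleanest path.
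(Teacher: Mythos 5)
Your proof is correct, but it takes a genuinely different route from the paper. You argue by induction on the recurrence $P(n+1)=\frac{1-P(n)-nP(n-1)}{63-n}$, and the algebra does close: substituting $P(2m-1)=P(2m)$ into the $n=2m$ instance yields $(63-2m)P(2m+1)=1-(2m+1)P(2m)$, and feeding that into the $n=2m+1$ instance gives $P(2m+2)=\frac{(63-2m)P(2m+1)-P(2m+1)}{62-2m}=P(2m+1)$, exactly as you describe; the base case $P(2)=P(1)=0$ is immediate. The paper instead gives a structural argument: take a random set $S$ consisting of $2n-1$ cards together with the zero card, pick a random card $\vec{v}$ outside $S$, and translate every element of $S$ by $\vec{v}$. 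The result is a uniformly random set $S'$ of $2n$ nonzero cards, and since $2n\cdot\vec{v}=\vec{0}$ in $\mathbb{Z}_2^6$ the total sum is unchanged, so $S'$ is matched exactly when $S$ is; this identifies the two probabilities directly. The paper's bijection explains \emph{why} the identity holds (it is the translation symmetry of the affine space over $\mathbb{F}_2$) and is self-contained, while your induction is more mechanical and inherits its rigor from the recurrence, whose derivation in the paper is slightly informal (one should note that the events ``the completion card equals the $i$th chosen card'' are pairwise disjoint, so their probabilities genuinely add to $nP(n-1)$). Your closing remark that the symmetry $P(n)=P(63-n)$ reverses parity and so cannot give the result directly is also accurate.
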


\begin{proof}
Consider a set $S$, chosen randomly, consisting of $2n-1$ cards and a zero card. Let $\vec{v}$ be a vector representing a card chosen randomly from the remaining $63-(2n-1)=64-2n$ cards. Next, add the vector $\vec{v}$ to the vector representing each card in $S$. The resulting vectors also correspond to a set $S'$ of cards chosen randomly from the 63 cards since the sum of two random vectors is also a random vector chosen from the whole set of cards. Moreover, the sum of vectors in $S$ is the same as the sum in $S'$, as we have added in total $2n$ copies of vector $\vec{v}$ and $2n \cdot \vec{v} = 0$ in $\mathbb{Z}_2^6$. 

Thus, the set $S'$ has matching socks if and only if the set $S$ has matching socks. Hence, the probability that $2n-1$ random cards form a matched set is the same as the probability that $2n$ random cards form a matched set.
\end{proof}

Figure~\ref{fig:graphP(n)} shows the graph of the values of $P(n)$ for odd $n < 32$. The values for even $n$ can be calculated using the property $P(2n) = P(2n-1)$, and for $n > 32$ by symmetry: $P(n) = P(63-n)$.

\begin{figure}[ht!]
\begin{center}
\includegraphics[scale=0.4]{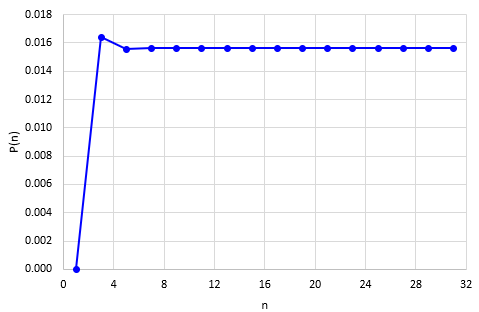}
\end{center}
\caption{Graph of $P(n)$.}
\label{fig:graphP(n)}
\end{figure}

From this graph, we see that starting from $n=5$, the values look the same. They are not. To prove this, we calculated the actual probabilities and put them into Table~\ref{table:p(n)}.

\begin{table}[ht!]
\begin{center}
\begin{tabular}{|c|c|c|c|}
\hline
$n$ & Exact value of $P(n)$       & $P(n)$ in decimal & $P(n)-\frac{1}{64}$ \\ \hline
1   & $0$                         & $0$                          & -0.01562500000                            \\ \hline
3   & 1/61                        & $0.01639344262$              & 0.00076844262                             \\ \hline
5   & 56/3599                     & $0.01555987774$              & -0.00006512226                            \\ \hline
7   & 1069/68381                  & $0.01563299747$              & 0.00000799747                             \\ \hline
9   & 11752/752191                & $0.01562369132$              & -0.00000130868                            \\ \hline
11  & 56629/3624193               & $0.01562527161$              & 0.00000027161                             \\ \hline
13  & 962672/61611281             & $0.01562493077$              & -0.00000006923                            \\ \hline
15  & 6738743/431278967           & $0.01562502119$              & 0.00000002119                             \\ \hline
17  & 18630608/1192359497         & $0.01562499233$              & -0.00000000767                            \\ \hline
19  & 980559/62755763             & $0.01562500324$              & 0.00000000324                             \\ \hline
21  & 6023432/385499687           & $0.01562499842$              & -0.00000000158                            \\ \hline
23  & 246960751/15805487167       & $0.01562500089$              & 0.00000000089                             \\ \hline
25  & 246960728/15805487167       & $0.01562499943$              & -0.00000000057                            \\ \hline
27  & 9137547511/584803025179     & $0.01562500041$              & 0.00000000041                             \\ \hline
29  & 63962829472/4093621176253   & $0.01562499966$              & -0.00000000034                            \\ \hline
31  & 703591154207/45029832938783 & $0.01562500032$              & 0.00000000032                             \\ \hline
\end{tabular}
\end{center}
\caption{The values of $P(n)$.}
\label{table:p(n)}
\end{table}
The middle values of $P(n)$ fluctuate around $0.015625$ or $\frac{1}{64}$. This mystery can be explained by the following proposition.

\begin{proposition}
If we add an empty card to the socks deck and then randomly choose $n$ cards, the probability that the chosen set has matched socks is $\frac{1}{64}$.
\end{proposition}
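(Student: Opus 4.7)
Once the zero card is included, the deck becomes the full vector space $V = \mathbb{Z}_2^6$ of size $64$, and ``matched'' reduces to the condition $\sum_{\vec{s} \in S} \vec{s} = \vec{0}$. My plan is to exploit the translation action of $V$ on its $n$-element subsets: for $\vec{v} \in V$, send $S \mapsto S + \vec{v} := \{\vec{s} + \vec{v} : \vec{s} \in S\}$. Translating shifts the sum by $n\vec{v}$, which equals $\vec{v}$ whenever $n$ is odd, since $2\vec{v} = \vec{0}$ in $\mathbb{Z}_2^6$. This is essentially the recipe behind the proof of $P(2n)=P(2n-1)$, now applied globally rather than just between consecutive sizes.

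The first key step is to show that this action is free on $n$-subsets for odd $n$: if $S + \vec{v} = S$ with $\vec{v} \neq \vec{0}$, then $S$ decomposes into $\langle \vec{v} \rangle$-orbits of size $2$, forcing $|S|$ to be even. Hence every $V$-orbit on $n$-subsets has size $|V| = 64$, and the $\binom{64}{n}$ subsets partition into exactly $\binom{64}{n}/64$ orbits. The second step is to observe that within each orbit, the map $\vec{v} \mapsto \mathrm{sum}(S + \vec{v}) = \mathrm{sum}(S) + \vec{v}$ is a bijection $V \to V$; hence exactly one translate per orbit has sum $\vec{0}$, giving $\binom{64}{n}/64$ matched subsets in total and probability $\tfrac{1}{64}$.

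The main subtlety, and the place to tread carefully, is the parity hypothesis: the identity $n\vec{v} = \vec{v}$ relies on $n$ being odd, and for even $n$ the sum is translation-invariant so this particular argument breaks. This matches the regime shown in Figure~\ref{fig:graphP(n)} and Table~\ref{table:p(n)}, which only tabulate odd values. Once the probability $\tfrac{1}{64}$ is established, it can be related back to $P(n)$ by conditioning on whether the chosen set contains the zero card: the $\tfrac{1}{64}$ average then forces $P(n) - \tfrac{1}{64}$ and $P(n-1) - \tfrac{1}{64}$ to have opposite signs of comparable magnitude, which explains the alternating pattern in the last column of Table~\ref{table:p(n)}.
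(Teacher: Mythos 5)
Your proof is correct, and it is a genuinely different (and more careful) argument than the paper's. The paper's proof is a one-line assertion that the bitwise XOR of $n$ randomly chosen vectors in $\mathbb{Z}_2^6$ is uniformly distributed over the $64$ possible values; it gives no justification and no restriction on $n$. You instead prove uniformity by orbit counting: the translation action of $V=\mathbb{Z}_2^6$ on $n$-subsets is free when $n$ is odd (a nontrivial stabilizing $\vec{v}$ would split $S$ into $\langle\vec{v}\rangle$-orbits of size $2$, forcing $|S|$ even), each orbit has size $64$, and $\vec{v}\mapsto\mathrm{sum}(S)+\vec{v}$ hits $\vec{0}$ exactly once per orbit. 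What your approach buys is important here: the proposition as literally stated is \emph{false} for even $n$ (for $n=2$ the probability of a matched pair of distinct cards is $0$, not $\tfrac{1}{64}$, and more generally for even $n$ the probability equals $P(n)$, e.g.\ $\tfrac{1}{61}$ for $n=4$), and the paper's claimed uniformity of the XOR likewise fails for even $n$. Your proof makes the necessary hypothesis ``$n$ odd'' explicit and shows exactly where it is used ($n\vec{v}=\vec{v}$), which is consistent with how the paper actually applies the proposition — only to $2n+1$ cards in the subsequent theorem. Your closing remark connecting the result back to the sign alternation of $P(n)-\tfrac{1}{64}$ is a plausible heuristic but is not part of the proposition and would need the conditioning identity $\tfrac{1}{64}=\tfrac{\binom{63}{2n+1}}{\binom{64}{2n+1}}P(2n+1)+\tfrac{\binom{63}{2n}}{\binom{64}{2n+1}}P(2n)$ to be made precise, as the paper does in its next theorem.
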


\begin{proof}
The bitwise XOR of $n$ random vectors in space $\mathbb{Z}_2^6$ can be any vector in $\mathbb{Z}_2^6$ with the same probability. Therefore, the probability of 0 is $\frac{1}{64}$.
\end{proof}

Now we can derive a formula for $P(n)$.

\begin{theorem}
We have
\[P(2n+1)=\frac{1}{64}-(-1)^n\frac{(2n+1)!!(61-2n)!!}{64\cdot 61!!}.\]
\end{theorem}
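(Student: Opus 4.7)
The plan is to exploit the identity $P(2n)=P(2n-1)$ (the previous theorem) to collapse the two-term recursion for $P$ into a one-term recursion on the odd-indexed values, then shift by $\frac{1}{64}$ (motivated by the preceding proposition) so that the recursion becomes purely multiplicative, and finally unfold the product into a double-factorial expression.

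First, I would set $Q(n):=P(2n+1)$. Applying the given recursion at index $n+1=2n+1$, i.e.\ at $n\to 2n$, gives
\[P(2n+1)=\frac{1-P(2n)-2nP(2n-1)}{63-2n}.\]
Using $P(2n)=P(2n-1)$ from the previous theorem, this collapses to
\[Q(n)=\frac{1-(2n+1)Q(n-1)}{63-2n}.\]
Next I would introduce $R(n):=Q(n)-\frac{1}{64}$. A short computation shows that the constant $\frac{1}{64}$ is exactly the fixed point of the affine recursion: substituting $Q(n)=R(n)+\frac{1}{64}$ and using $(63-2n)+(2n+1)=64$, the constant terms cancel and one obtains the clean homogeneous recurrence
\[R(n)=-\frac{2n+1}{63-2n}\,R(n-1).\]
(This is not magic: the proposition about the augmented 64-card deck tells us exactly that $\frac{1}{64}$ is the right quantity to subtract off.)

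Now I would iterate the recurrence from the base case $R(0)=P(1)-\frac{1}{64}=-\frac{1}{64}$ to get
\[R(n)=-\frac{1}{64}\prod_{k=1}^{n}\left(-\frac{2k+1}{63-2k}\right)=-\frac{(-1)^n}{64}\cdot\frac{\prod_{k=1}^n(2k+1)}{\prod_{k=1}^n(63-2k)}.\]
The numerator is $3\cdot 5\cdots(2n+1)=(2n+1)!!$, and the denominator is $61\cdot 59\cdots(63-2n)=\frac{61!!}{(61-2n)!!}$. Substituting and rearranging yields
\[P(2n+1)=\frac{1}{64}-(-1)^n\frac{(2n+1)!!(61-2n)!!}{64\cdot 61!!},\]
which is the claimed formula.

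The potential obstacles are minor: one must be careful that $P(2n)=P(2n-1)$ is what is needed to reduce to a one-step recursion (rather than the superficially more natural two-step form), and one must keep track of the double-factorial bookkeeping. A quick sanity check at $n=0$ recovers $P(1)=0$ and at $n=1$ recovers $P(3)=\frac{1}{61}$, matching the table, which I would include as verification.
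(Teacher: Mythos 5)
Your proof is correct, and its engine --- subtract $\frac{1}{64}$, obtain the homogeneous one-step recurrence $R(n)=-\frac{2n+1}{63-2n}R(n-1)$ on the odd-indexed values, and telescope into double factorials --- is exactly the paper's. The one place you diverge is in how you reach the linear relation $1=(63-2n)P(2n+1)+(2n+1)P(2n)$: you specialize the earlier recursion $P(n+1)=\frac{1-P(n)-nP(n-1)}{63-n}$ at even index and invoke $P(2n)=P(2n-1)$, discovering only afterwards (algebraically) that $\frac{1}{64}$ is the fixed point of the resulting affine map. The paper instead derives that same relation directly from the proposition about the augmented 64-card deck, by conditioning on whether the empty card is among the $2n+1$ chosen cards; after clearing the binomial coefficients $\frac{\binom{63}{2n+1}}{\binom{64}{2n+1}}=\frac{63-2n}{64}$ and $\frac{\binom{63}{2n}}{\binom{64}{2n+1}}=\frac{2n+1}{64}$, one lands on the identical equation. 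Your route is slightly more mechanical and leans on the stated recursion for $P(n+1)$ (which the paper asserts but does not formally prove), while the paper's conditioning argument is self-contained given the $\frac{1}{64}$ proposition and explains conceptually, rather than by cancellation, why $\frac{1}{64}$ is the right constant to subtract. The base case $R(0)=-\frac{1}{64}$, the product unfolding, and the sanity checks at $n=0,1$ all match.
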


\begin{proof}
What is the probability that $2n+1$ cards chosen randomly, form are matched set when the empty card is allowed? Since all values of their sum are equally likely, the probability is $\frac{1}{64}$. We can calculate the same value differently. There is a $\frac{\binom{63}{2n+1}}{\binom{64}{2n+1}}$ probability the cards do not contain zero and a $\frac{\binom{63}{2n}}{\binom{64}{2n+1}}$ probability they do contain zero. If they contain zero, they are matched if the remaining cards are matched, which happens with probability $P(2n)$. If they do not contain zero, the probability they are matched is $P(2n+1)$. So
\[\frac{1}{64}=\frac{\binom{63}{2n+1}}{\binom{64}{2n+1}}P(2n+1)+\frac{\binom{63}{2n}}{\binom{64}{2n+1}}P(2n).\] Since $P(2n)=P(2n-1)$, we have $\frac{1}{64}=\frac{\binom{63}{2n+1}}{\binom{64}{2n+1}}P(2n+1)+\frac{\binom{63}{2n}}{\binom{64}{2n+1}}P(2n-1)$. Let $P(n)=\frac{1}{64}+Q(n)$. Then we have $0=\binom{63}{2n+1}Q(2n+1)+\binom{63}{2n}Q(2n-1)$. After canceling out $\frac{63!}{(2n)!(63-2n-1)!}$, we get
\[0=\frac{Q(2n+1)}{2n+1}+\frac{Q(2n-1)}{63-2n}.\]
Thus $Q(2n+1)=-\frac{2n+1}{63-2n}Q(2n-1)$. Keeping in mind that $Q(1)=-\frac{1}{64}$, it follows that
\[Q(2n+1)=-\frac{1}{64}\prod_{i=1}^n \left(-\frac{2i+1}{63-2i}\right).\] So $Q(2n+1)=(-1)^{n+1}\frac{(2n+1)!!(61-2n)!!}{64\cdot 61!!}$, and $P(2n+1)=\frac{1}{64}-(-1)^n\frac{(2n+1)!!(61-2n)!!}{64\cdot 61!!}$.
\end{proof}

\subsection{Minimal sets}

Let us call a set of matched socks cards \textit{minimal} if it doesn't contain a smaller subset of matched socks. 

\begin{lemma}
\label{lemma:minimalsets}
The minimal matched set can have 3 to 7 cards. Each number from 3 to 7 inclusive is achievable.
\end{lemma}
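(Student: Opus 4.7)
The plan is to work in the vector-space model from the previous subsection: cards are the nonzero vectors of $\mathbb{Z}_2^6$, a matched set is a collection of distinct nonzero vectors whose XOR is $\vec{0}$, and minimality says that no nonempty proper subset is itself matched. Since over $\mathbb{F}_2$ a linear combination with $0$/$1$ coefficients is exactly a subset XOR, minimality of a matched set $\{\vec{v}_1,\dots,\vec{v}_n\}$ is equivalent to saying that every proper subset of it is linearly independent in $\mathbb{Z}_2^6$.

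First I would establish the two bounds. For the lower bound, a single nonzero vector cannot equal $\vec{0}$, and two distinct vectors $\vec{a}\neq\vec{b}$ satisfy $\vec{a}+\vec{b}\neq\vec{0}$, so no matched set has size $1$ or $2$, forcing any minimal matched set to have size at least $3$. For the upper bound, let $\{\vec{v}_1,\dots,\vec{v}_n\}$ be a minimal matched set and delete any one $\vec{v}_i$; the remaining $n-1$ vectors must be linearly independent, since otherwise some nonempty subset of them would XOR to $\vec{0}$, giving a smaller matched subset and contradicting minimality. As $\dim \mathbb{Z}_2^6 = 6$, we conclude $n-1\leq 6$, so $n\leq 7$.

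Next I would exhibit a minimal matched set of each size $k\in\{3,4,5,6,7\}$ with a uniform construction. Let $\vec{e}_1,\dots,\vec{e}_6$ be the standard basis of $\mathbb{Z}_2^6$ and set
\[
M_k \;=\; \bigl\{\vec{e}_1,\vec{e}_2,\dots,\vec{e}_{k-1},\;\vec{e}_1+\vec{e}_2+\cdots+\vec{e}_{k-1}\bigr\}.
\]
For $k\leq 7$ this is a $k$-element set of distinct nonzero vectors whose XOR is $\vec{0}$. To verify minimality, observe that a proper nonempty subset of $M_k$ either omits the ``sum'' vector, in which case its XOR is a nonempty partial basis sum and hence nonzero, or it contains the sum vector together with a proper subset of the basis vectors, in which case its XOR equals a nonempty partial basis sum on the complementary indices, again nonzero. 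Either way the XOR is nonzero, so $M_k$ is minimal.

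The bulk of the argument is elementary linear algebra, and the main (mildly delicate) obstacle is the case analysis for minimality of $M_k$: one must handle subsets contained in the basis part and subsets containing the sum vector separately. Both cases reduce immediately to the fact that any nonempty XOR of distinct standard basis vectors is nonzero, so the constructions go through for all $k\in\{3,4,5,6,7\}$. Combined with the bounds from the previous paragraph, this shows that the possible sizes of a minimal matched set are exactly $3,4,5,6,7$.
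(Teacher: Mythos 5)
Your proof is correct and follows essentially the same route as the paper: the same construction (single-sock cards, i.e.\ standard basis vectors, plus the card carrying all those socks), the same linear-independence argument for minimality, and the same dimension count for the upper bound of $7$. You spell out the minimality case analysis and phrase the upper bound via ``any $n-1$ of the vectors must be independent'' rather than ``any $7$ vectors are dependent,'' but these are only cosmetic differences.
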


\begin{proof}
If we have a matched set of socks consisting of $1$ card, the card should be empty. Such a card is not in the deck. If we have a matched set of socks consisting of $2$ cards, the cards have to be identical. Thus, we do not have matched sets of socks with 1 or 2 cards.

Suppose $n$ is a number in the range 3--7. We now generate a matched set of cards of size $n$. We start with $n-1$ cards with one sock in each. These socks will necessarily be different. The last card is the card that completes the chosen cards to a matched set of socks. Namely, the last card contains all socks appearing on the chosen 1-sock cards. Such sets are minimal since 1-sock cards are all linearly independent. Thus we can have a minimal set with $3$ to $7$ cards. 

It is left to show that we can't have a minimal set with more than $7$ cards. If we have more than 7 cards, then any subset of seven cards corresponds to linearly dependent vectors in the vector space $\mathbb{Z}_2^6$. Thus, there is a subset $a_i$ of these cards in this set satisfying an equation $\sum_i a_i = 0$. This subset is a matched set.
\end{proof}

This game corresponds to a projective space over the field $\mathbb{F}_2$. It could be extended to any dimension. In the next game, the space must be a projective plane, but the underlying field can be changed.

\section{The Spot it! Game}

This game is well-known, especially among toddlers.

\subsection{The cards and the game}

In the game Spot it!, also called Dobble, the deck has round cards with 8 different symbols on each. These symbols can reappear on different cards, with 57 different symbols overall. Figure~\ref{fig:SpotIt} shows a sample of cards.

\begin{figure}[ht!]
\begin{center}
\includegraphics[scale=0.15]{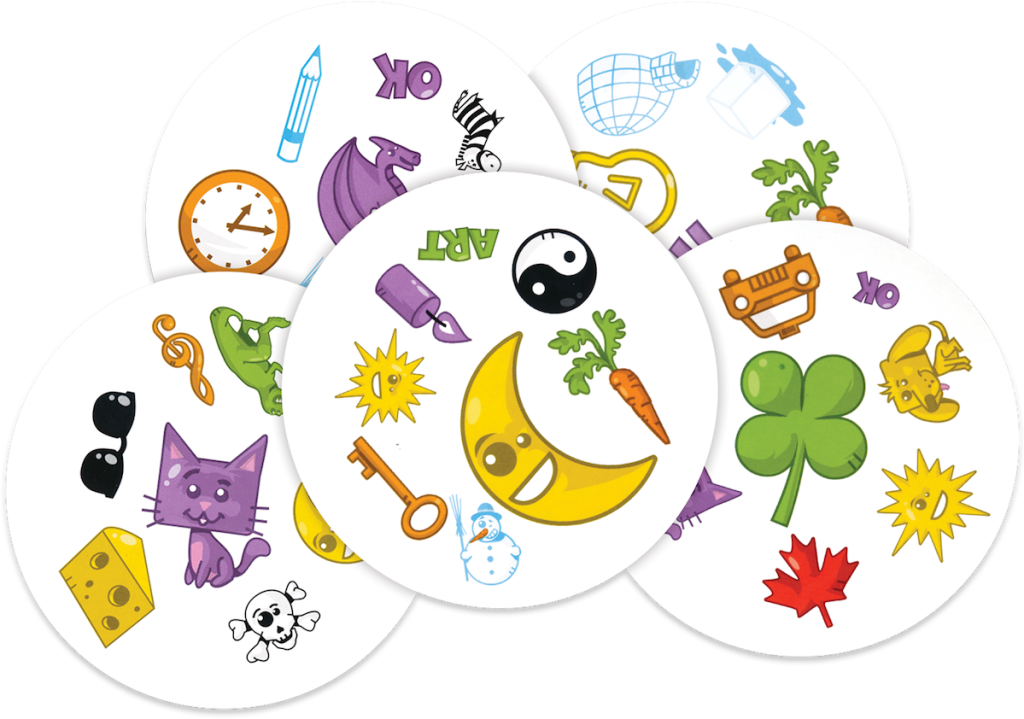}
\end{center}
\caption{A sample of Spot It! cards.}
\label{fig:SpotIt}
\end{figure}

There are many ways to play this game. We will describe the two most popular ways: the Tower and the Well.

In the Tower, one card is given face down to each player, and the remaining cards a placed in one pile in the middle of the table face up. Simultaneously, players flip their cards face up and try to spot the same symbol on the center and their own cards. When someone finds a match, they yell out the matching symbol, get the card from the top, and place it into their pile face up. The game continues with the new card in the center until the deck runs out. The player with the most cards wins.

In the Well, one card is placed face up in the center, and the remaining cards are distributed among players face down. Each player picks their top card and tries to find the matching symbol between their card and the center card. Once someone does, they yell out the matching symbol and put their card face up on the middle pile. The game continues with the new card in the center. The first person to get rid of all their cards wins.

\subsection{Interesting properties}

The interesting property that the cards have is the following. Each pair of cards share exactly one symbol. That means in both games above, each player has a chance to win a card. The winner is the fastest player. By the way, it is not clear why in the deck you buy, there are 55 cards. Mathematically the complete deck should have 57 cards, the same number as the number of symbols. The designers argued that for the Well version of the game, all but one card need to be distributed among the players, so for 55 cards total, the number 54 can be divided equally into many different players. Interestingly, this argument is not very strong. One can always set aside a few cards without damaging the property that any two cards have a symbol in common.

\subsection{Connection to linear algebra}

The game Spot It! has connections to linear algebra, like the Socks and SET. This connection explains how to build this amazing deck. But first, we need to define a projective plane.

A projective plane is similar to a regular plane, where we want every two distinct lines to intersect. We can do it by adding extra points and a line to the Euclidean plane. For every direction (i.e., a maximum set of mutually parallel lines), we add a point at infinity so that the lines in this direction will intersect at the new point. We also add a new line, which is considered incident with all the points at infinity (and no other points).

The beauty of a projective plane is that the points and the lines have similar properties: for any two points, there is exactly one line that passes through them, and for any two lines, there is exactly one point of intersection. In finite projective planes, the number of points is the same as the number of lines. It means any full deck of the game Spot It! has to have the same number of cards as the number of different symbols.

In the game Spot It!, we can view the points as symbols and, similarly, lines as cards. It follows that every two cards have one symbol in common and that for every two symbols, there is exactly one card containing both. By symmetry, we can view the points as cards and lines as symbols.

It is known that we can build a finite projective plane corresponding to a number $q$ that is a prime power. The number of points and lines in this geometry is $q^2+q+1$. Our particular deck corresponds to $q=7$. Then this means that the number of points, as well as the number of lines, in this geometry is $7^2+7+1=57$. This is the same as the number of symbols and the number of cards supposed to be in the deck. However, as mentioned before, there are only 55 cards total in the actual Spot It! deck.

Also, we know that $q+1$ points are on every line, and for every point, $q+1$ lines pass through it. So, in the Spot It! game, $q+1=8$, which means that for every card, there are 8 symbols, and likewise, every symbol appears on 8 different cards (in a potential full deck).

There is a junior version of this game corresponding to $q=5$. The deck should contain $5^2+5+1=31$ cards. This number is perfect as it matches the stated goal of having the number of cards minus 1 highly divisible. Surprisingly, this deck has 30 cards. We can't explain this discrepancy. However, we, again, have an exercise for the reader to calculate how many symbols should be on every card. More detail on the mathematics of this game can be found in the paper by Coggin and Meyer \cite{CM}.

\subsection{Compare to the game of Socks}

In the game of Socks, each of the 63 cards can be thought of as nonzero vectors in the space $Z_2^6$, which together form a projective space of order 2 and dimension 5. Meanwhile, the cards in the game of Spot It! form a projective plane of order 7 and dimension 2. Since both are played in a projective space, we can connect the two games (albeit indirectly).

To do this, we can look at simplified versions of these games, which we call Baby Spot It! and Baby Socks. Baby Spot It! corresponds to a projective plane of order 2. Hence, there are $2^2 +2 +1=7$ cards instead of 57 with 3 symbols instead of 8 on each. Baby Socks can be viewed as nonzero points in $\mathbb{Z}_2^3$. There are three types of socks instead of six. In such a deck, there will be $2^3-1=7$ cards. They form a projective plane instead of a projective space of dimension 5 for the usual game.

Thus, we get a projective plane with seven cards in both simplified games. That means we can use the Baby Socks cards as symbols in the new Baby Spot It! game. The cards in the Baby Spot It! game are lines in the projective space. Hence, each card would consist of three images of the Socks card that form a matched set. The cards can be arranged in a Fano plane. 
 In such a plane, each line (Baby Socks matched set) represents one of the Baby Spot It! cards, and each point (Baby Socks card) represents a symbol on the Baby Spot It! cards. Since in a Fano plane, there are 3 lines through each point and 3 points that make up each line, this satisfies the conditions to make a Baby Spot It! deck.

\begin{figure}[ht!]
\begin{center}
\includegraphics[scale=0.15]{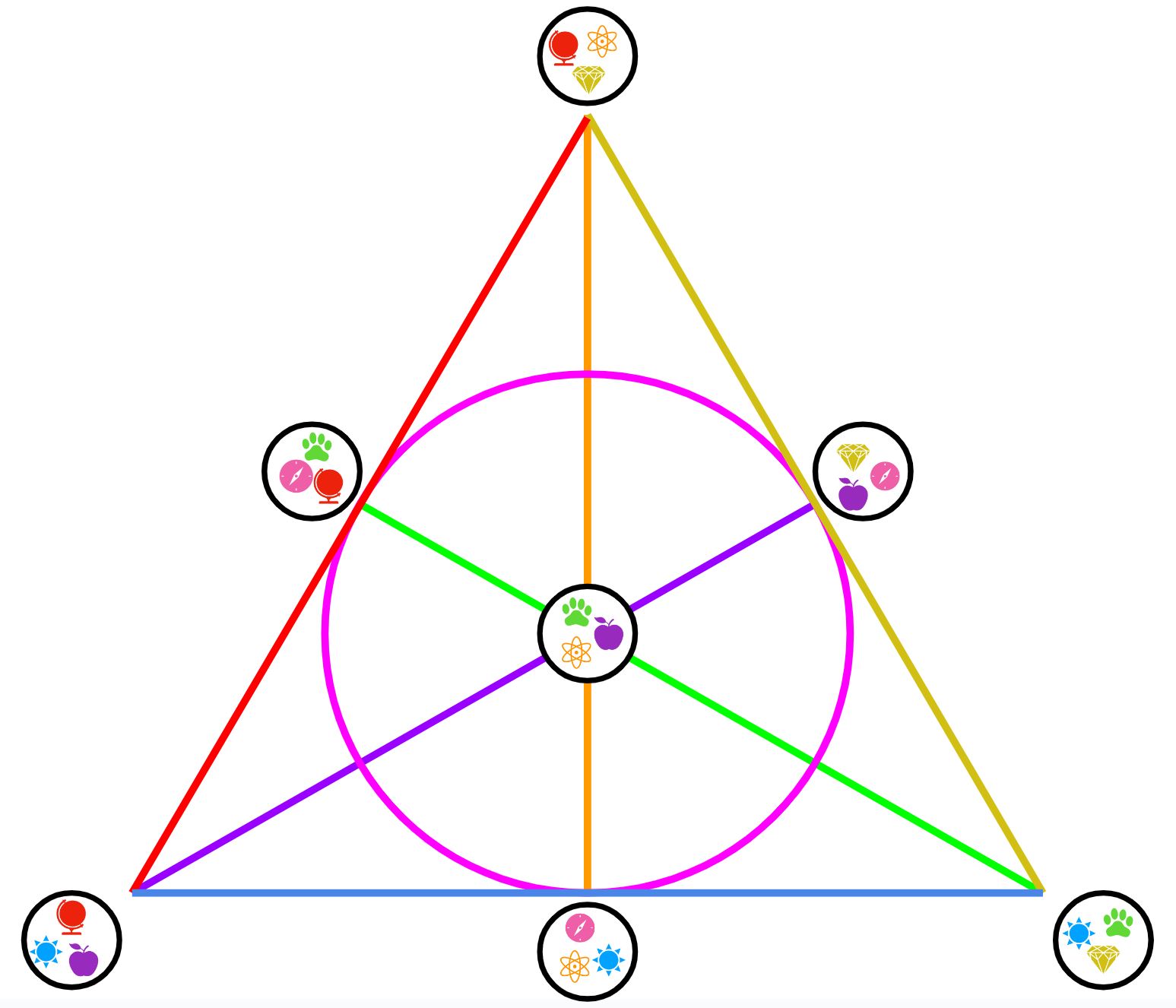}
\quad
\includegraphics[scale=0.15]{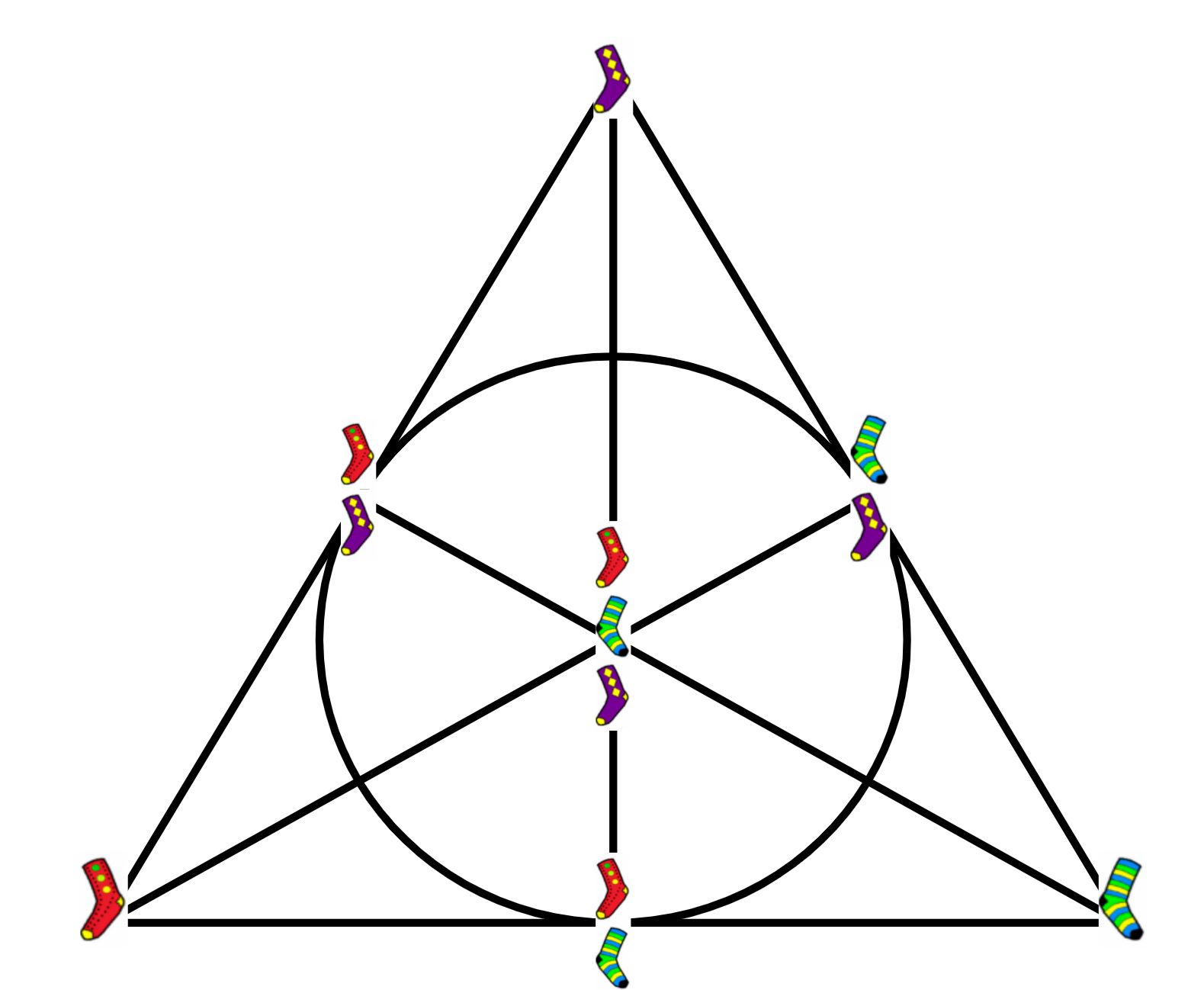}
\end{center}
\caption{Baby Spot It! and Baby Socks as the Fano plane.}
\label{fig:Baby}
\end{figure}

In some sense, the two baby games are equivalent. In Baby Socks, given two cards that can be viewed as points, we are looking for the third card that completes the line. In Baby Spot It!, we can also view two cards as points. The goal is to find the line passing through these points represented by a shared symbol on both cards. Thus, in both games, we are looking for a line in a Fano plane that passes through two given points, as seen in Figure~\ref{fig:Baby}.

Now we are ready to describe an exciting new game that also has connections to linear algebra and vector spaces.

\section{The EvenQuads game}

Many people tried to extend the game of SET to four values of each attribute. They wanted four cards to form a set and keep the property that any three cards could be uniquely completed into a set. One idea is to have four cards to form a set if an attribute's total value equals 0 modulo 4. Consider what happens with this rule when we only look at one attribute. The cards with all the same or all different values form a set, but we also have a lot of other sets. For example, we can have (0,1,1,2), but we can't have (0,2,2,1), while in the game of SET, the values are interchangeable. The sets are difficult to spot and do not have nice symmetries. A more serious problem with this approach is that the card needed to complete a set may have already been used. For example, we can take the cards 000, 123, 321. The card 000 is needed to make the set of cards equal 0 modulo 4 for each attribute, but the card has already been used.

All the issues were resolved in the game of Quads that was introduced by Rose and Perreira \cite{Rose}. It was originally thought of as a generalization of the game of SET and was initially called SuperSET. Now it is also called EvenQuads.

\subsection{The cards and the game}

The EvenQuads deck consists of $64 = 4^3$ cards with different objects. The cards have 3 attributes with 4 values each:
\begin{itemize}
\item Number: 1, 2, 3, or 4 symbols.
\item Color: Red, green, yellow, or blue.
\item Shape: Square, icosahedron, circle, or spiral.
\end{itemize}

A \textit{quad} consists of four cards so that for each attribute, the cards must be either: all the same, all different, or half and half. Figure~\ref{fig:quadex} shows an example of a quad with all attributes different.

\begin{figure}[ht!]
\begin{center}
\includegraphics[scale=0.75]{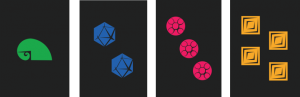}
\end{center}
\caption{An example of a quad.}
\label{fig:quadex}
\end{figure}

In EvenQuads, the dealer lays out $x$ cards from the deck facing up. Then, without touching the cards, players look for four cards that form a quad. As soon as the player finds a quad, they have to yell out ``Quad,'' and whoever says it first gets to take the four cards, as long as they are correct. If the four cards they chose do not form a quad, they have to put the cards back. If the cards do form a quad, then four new cards are drawn from the deck. If players cannot find a quad after a while, the dealer may lay out an additional card. The game ends when no more cards are left in the deck and there are no possible quads in the remaining cards. The winner of the game is the one with the most quad cards.

The number of cards $x$ might be chosen depending on the sophistication of players. The recommended number is in the range between 6 and 9 inclusive. The more cards on the table, the easier it is to find a quad.

The symbols on the deck are not random. They are logos of famous mathematical organizations! The Mathematical Association of America (MAA)'s logo is the icosahedron. The National Association of Mathematicians (NAM)'s logo is the square. The Association for Women in Mathematics (AWM)'s logo is the spiral. The Women and Mathematics Education (WME)'s logo is the circle. The logos are presented in Figure~\ref{fig:logos}.

\begin{figure}[ht!]
\begin{center}
\includegraphics[height=1.7cm]{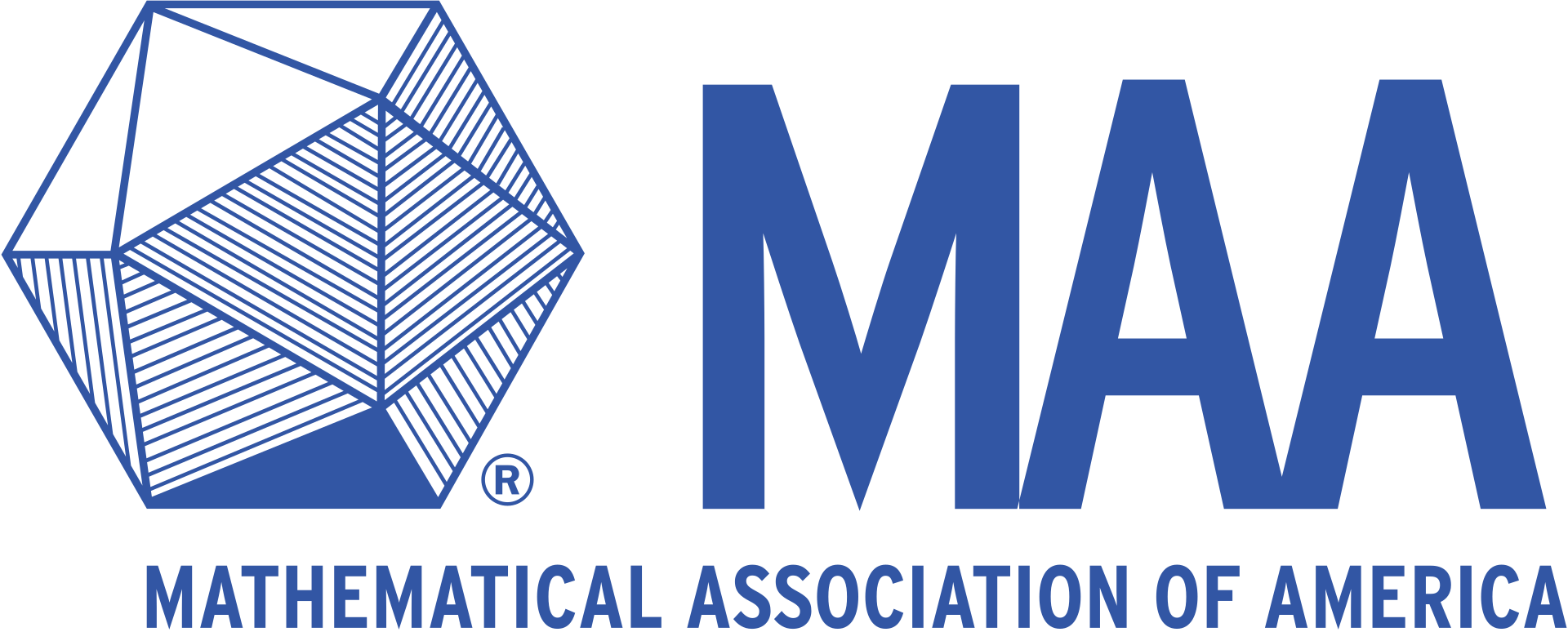}
\quad
\includegraphics[height=1.7cm]{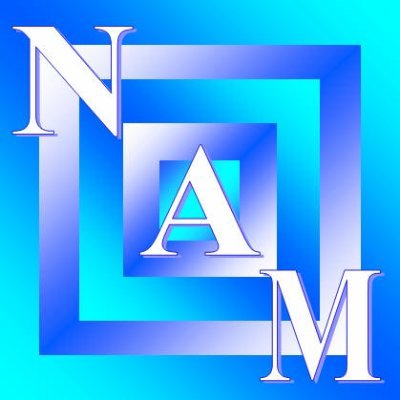}
\quad
\includegraphics[height=1.7cm]{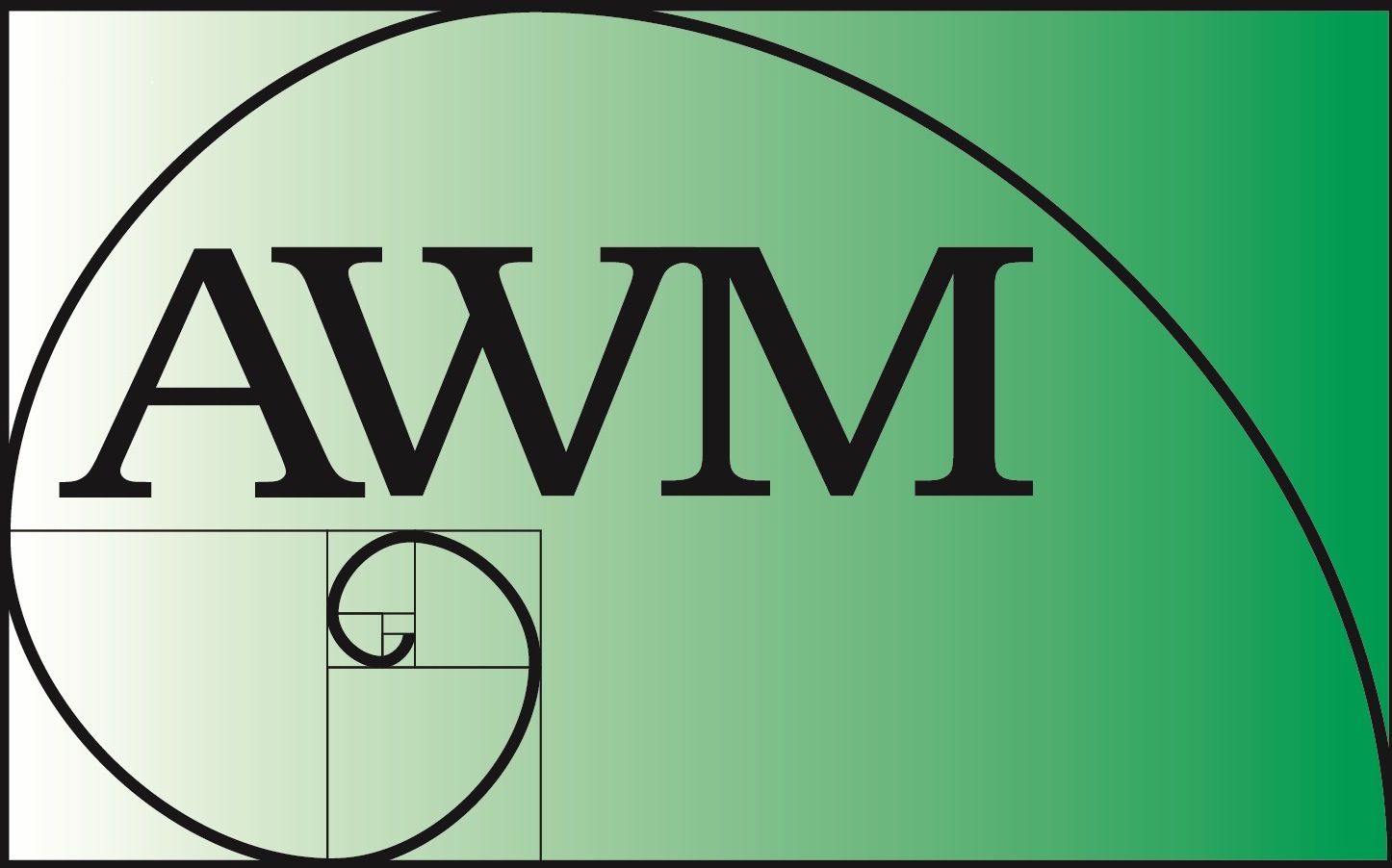}
\quad
\includegraphics[height=1.7cm]{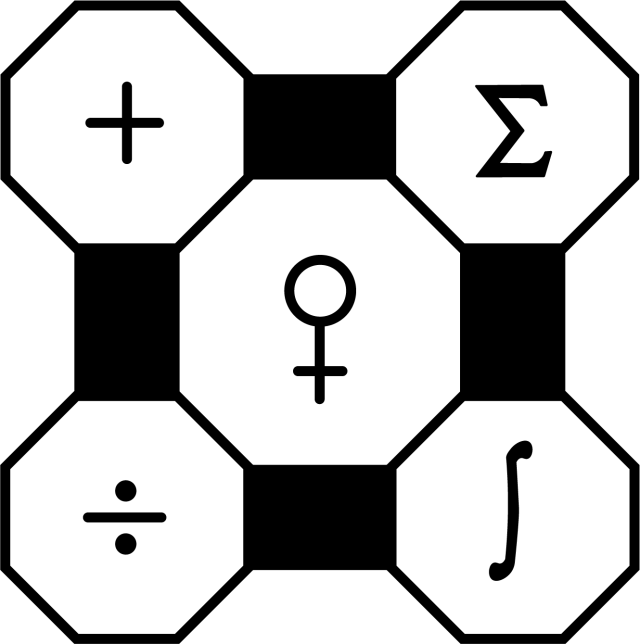}
\end{center}
\caption{Logos that inspired EvenQuads symbols.}
\label{fig:logos}
\end{figure}

\subsection{Interesting properties of the game of EvenQuads}

Similar to the game of SET, any three cards can be completed to a quad. We can prove this by looking separately at each attribute. The value of this attribute for three cards can be either all different, all the same, or two-plus-one. In any case, we know the value of the fourth card.

It follows that the probability of four cards making a quad is $\frac{1}{61}$. And the total possible number of quads is $\frac{\binom{64}{4}}{61} = 10416$. We can extend this result to a deck with $m$ different attributes. It is easy to see that the proposition still holds, so the probability of four cards making a quad is $\frac{1}{4^m - 3}$, and the total number of quads is $\frac{\binom{4^m}{4}}{4^m - 3}$.

Remember the notion of a noset in the game of SET we described in Section~\ref{sec:nosets}. Nosets, also called caps, are sets of cards that do not contain a set. In the game EvenQuads, we call a \textit{noquad} or \textit{2-cap} a set of cards without quads. It is known that the maximum size of a noset for the standard deck of the game of SET is 20. What is the maximum number of quad cards in a noquad? This and many other questions were answered in Crager et al.~\cite{CragerEtAl}. In particular, 10 cards always contain a quad. Thus, the maximum size of noquad is 9 cards. By the way, the probability that 9 cards do not contain a quad is about 0.036.

There are a lot of similarities between the games of SET and EvenQuads. In the first game, we have four features with three attributes each, and vice versa, in the second case. We can build an intersection deck in a sense. We can take all red cards from the SET deck. We call this deck a Q-SET to emphasize the connection to EvenQuads. The Q-SET deck has 27 cards with three different features and three attributes each. We can build an S-Quads deck by removing all red cards, all icosahedrons, and all cards with four shapes. The S-Quads deck has 27 cards with three different features and three attributes each. The Q-SET and S-Quads deck are isomorphic to each other. So we played the EvenQuads game with the Q-SET deck and the SET game with the S-Quads deck. It somewhat messed with our brains breaking the patterns we created for playing SET and EvenQuads. But it was fun. Unfortunately, the games ended too fast.

\subsection{Connection to linear algebra}

It might be tempting to look at the set of quad cards as a vector space $\mathbb{Z}_4^3$. But in this case, how do we describe the rule of a quad? If we consider numbers modulo 4, then in a quad, they sum to an even number modulo 4. But this condition is insufficient, as numbers 0, 0, 1, and 3 sum to zero modulo 4, but they do not satisfy the quads rule.

But we can sum the numbers differently. For example, we can use the bitwise XOR. Bitwise XOR for several bits returns 0 if the number of 1s is even and 1 if the number of 1s is odd. When we take the bitwise XOR sum of multiple binary strings, each bit in the resulting string is the bitwise XOR of the bits at the same position in the other strings. It is the same as the addition in base 2 but without carrying. We can convert the values 0, 1, 2, and 3 to binary, which are 00, 01, 10, and 11, respectively. If the four values for an attribute are all different, the bitwise XOR sum is 0. If two of the same value exist, then the bitwise XOR sum is 0 if and only if the remaining two values are equal, making the attribute either half and half or the same. So if the bitwise XOR sum of the values for an attribute in a set of four cards equals 0, it is a quad.

This is equivalent to saying that each attribute takes values in $\mathbb{Z}_2^2$. Thus, we can view our cards as vectors in $\mathbb{Z}_2^6$. Then four vectors $\vec{a}$, $\vec{b}$, $\vec{c}$, and $\vec{d}$ form a quad if and only if (see \cite{CragerEtAl})
\[\vec{a} + \vec{b} + \vec{c} + \vec{d} = \vec{0}.\]

If you remember, the ends of three vectors corresponding to a set formed a line in their vector space. What can we say about the ends of vectors corresponding to a quad? Consider four vectors $\vec{a}$, $\vec{b}$, $\vec{c}$, and $\vec{d}$. By a translation, we can assume that $\vec{a}$ is the origin. Then, from the above equation we get $\vec{d} = - \vec{b} - \vec{c} = \vec{b} + \vec{c}.$ This means that vector $\vec{d}$ is in the same plane as the origin and vectors $\vec{b}$ and $\vec{c}$. Thus, four cards form a quad if and only if their endpoints belong to the same plane.

There is a natural approach to symmetries of quads using linear algebra. Our space is an affine space. (which is the same as a vector space, but without choosing the origin). The symmetries of the space are affine transformations. Affine transformations are parallel translations and linear transformations.

\subsection{Visualization}

We can visualize the elements of $Z^6_2$ in a plane by arranging the 64 cards in an 8-by-8 grid, as suggested in \cite{CragerEtAl}. We divide the grid into four 4-by-4 squares, and each 4-by-4 square determines the first two coordinates. The top left square is labeled 00, the top right is 01, the bottom left is 10, and the bottom right is 11. Then the 4-by-4 squares are each divided into four 2-by-2 squares. Each of these 2-by-2 squares determines the next two coordinates and is labeled similarly. The 2-by-2 squares are each divided into 4 unit squares, determining the last two coordinates. Figure~\ref{fig:coordinates} shows the labeling for each pair of digits. 
\begin{figure}[ht!]
    \centering
    \includegraphics[width=10cm]{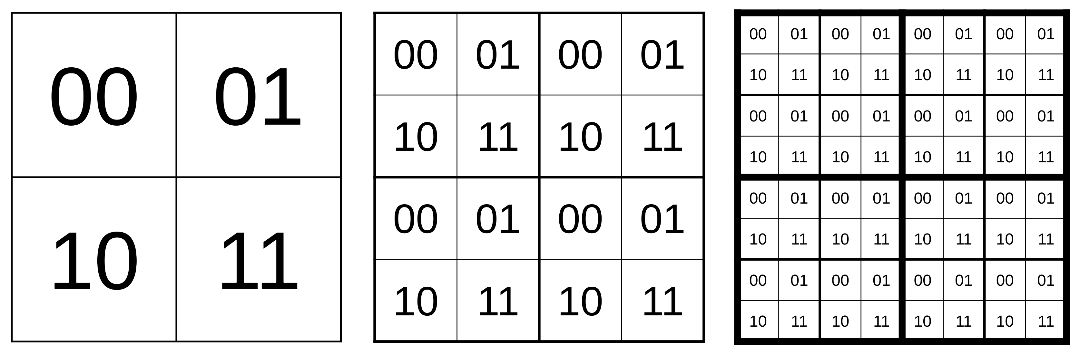}
    \caption{Coordinate labeling}
    \label{fig:coordinates}
\end{figure}

Figure~\ref{fig:coordinates2} shows an example where the green square's coordinates are 011100.
\begin{figure}[ht!]
    \centering
    \includegraphics[width=3.5cm]{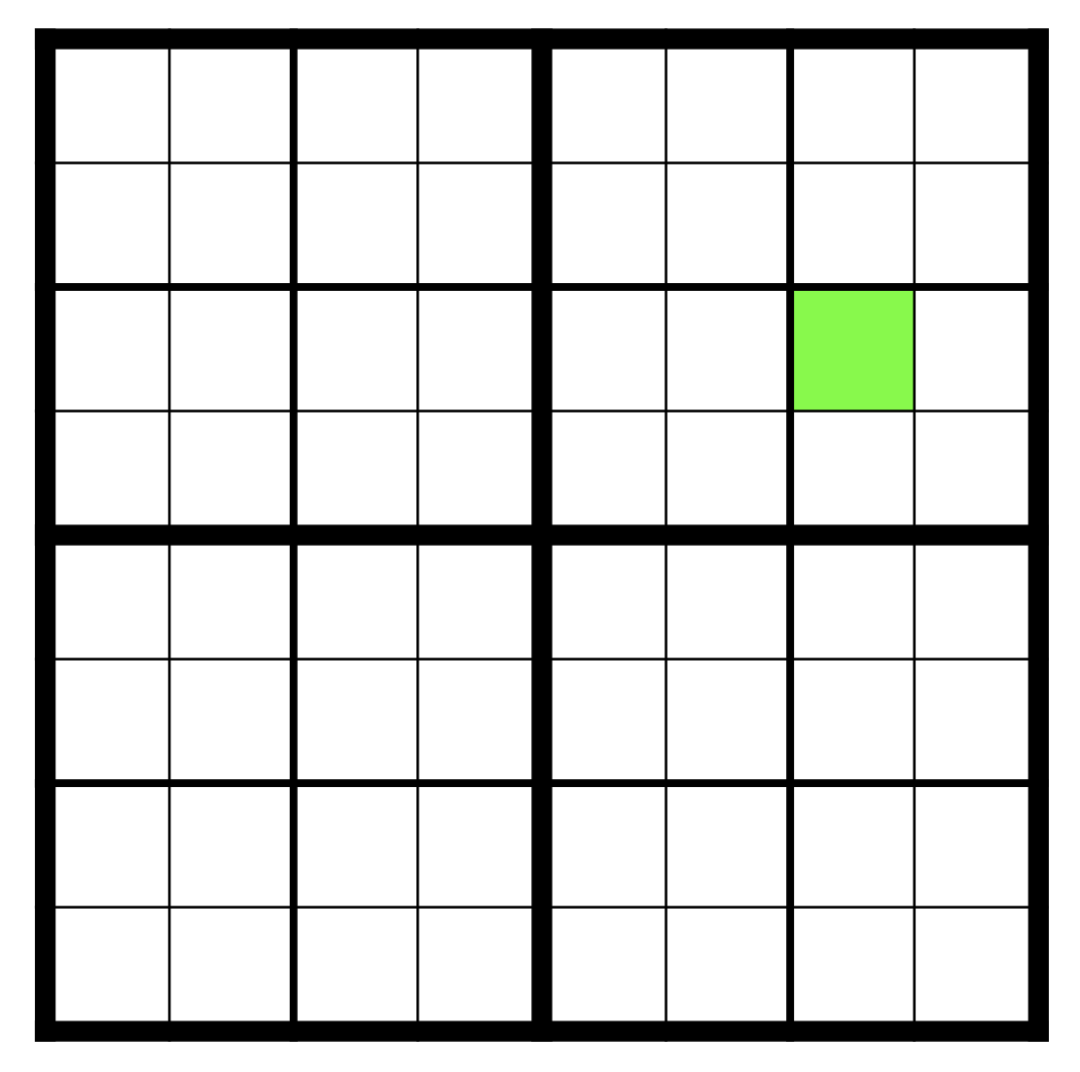}
    \caption{Example: the green square's coordinates are 011100.}
    \label{fig:coordinates2}
\end{figure}

Let's denote S to mean that an attribute is the same across all four cards, H to mean that two sets of two cards have the same attribute within each set but different overall, and D to mean that an attribute is different for all cards. Then, the possible ways to form a quad are SSD, SHH, SHD, SDD, HHH, HHD, and DDD, up to the reordering of the attributes. We excluded cases of SSS and SSH because they are impossible. SSS means that all 4 cards have the same value for each attribute. It follows that the four cards are identical. SSH means that for two of the attributes, all four cards have the same value. Then the two cards that have the same third attribute must be identical.

We give an example of how to visualize a quad of type SHD. As the first attribute is S, we have all four of its cells in one 4-by-4 square with two cells in one 2-by-2 square and the remaining two cells in another 2-by-2 square. If we look at these two 2-by-2 squares as if we were to overlap them, they would have to make a completely filled 2-by-2 square, meaning that the four squares each occupy one of 00, 01, 10, and 11. Figure~\ref{fig:SHD} shows two such examples.
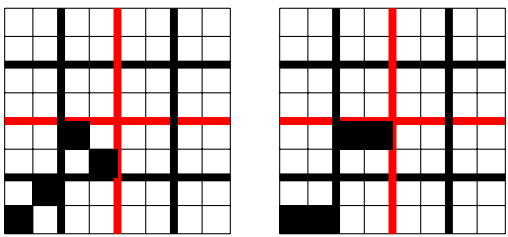
\begin{figure}[ht!]
 \begin{center}
     \begin{tikzpicture}[scale=0.75]
    \draw[step=0.5cm,color=black] (0,0) grid (4.0,4.0);
    \draw[line width=3.0,color=red] (0,2) - - (4,2);
    \draw[line width=3.0] (0,1) - - (4,1);
    \draw[line width=3.0] (0,3) - - (4,3);
    \draw[line width=3.0] (1,0) - - (1,4);
    \draw[line width=3.0,color=red] (2,0) - - (2,4);
    \draw[line width=3.0] (3,0) - - (3,4);
    \fill[black] (0,0) rectangle (0.5,0.5);
    \fill[black] (0.5,0.5) rectangle (1,1);
    \fill[black] (1,2) rectangle (1.5,1.5);
    \fill[black] (1.5,1.5) rectangle (2,1);
        \end{tikzpicture}
\quad
\begin{tikzpicture}[scale=0.75]
    \draw[step=0.5cm,color=black] (0,0) grid (4.0,4.0);
    \draw[line width=3.0,color=red] (0,2) - - (4,2);
    \draw[line width=3.0] (0,1) - - (4,1);
    \draw[line width=3.0] (0,3) - - (4,3);
    \draw[line width=3.0] (1,0) - - (1,4);
    \draw[line width=3.0,color=red] (2,0) - - (2,4);
    \draw[line width=3.0] (3,0) - - (3,4);
\fill[black] (0,0) rectangle (1,0.5);
\fill[black] (1,1.5) rectangle (2,2);
    \end{tikzpicture}
 \end{center}
     \caption{Visualizing quads of type SHD.}
    \label{fig:SHD}
\end{figure}
 
We showed how to play the game of SET with a subdeck of EvenQuads. But the most interesting connection is between the games of EvenQuads and Socks. We will discuss this in the next chapter.

\section{Relation between Socks and EvenQuads}

Here is a brief review of some necessary information:
\begin{itemize}
    \item The cards of EvenQuads and Socks can both be thought of as belonging to $\mathbb{Z}_2^6$. In Socks, each dimension represents whether a sock is on the card or not, and in EvenQuads, every two dimensions represent a value of an attribute.
    \item There are 63 cards in Socks and 64 in EvenQuads. Socks deck has 1 fewer card than $2^6$ because it misses the origin in $\mathbb{Z}_2^6$.
    \item In both games, a set can only be made if the bitwise XOR sum of the coordinates of each card is 0. However, Socks allows for any number of cards, while the game of EvenQuads restricts each set to exactly 4 cards.
\end{itemize}

\subsection{Correspondence}

Given any card in Socks, we correspond it to one card in EvenQuads, where each row in a Socks card corresponds to one attribute in an EvenQuads card. The way to calculate this correspondence is to turn the sequence of socks in each row into a binary number. Depending on the value of this number, we can determine the corresponding EvenQuads card.

We represent a Socks card as a $6$-digit binary number. If the first digit is $0$, sock $a$ is not on the card. If the first digit is $1$, sock $a$ is on the card. If the second digit is $0$, sock $b$ is not on the card. If the second digit is $2$, sock $b$ is on the card. Similarly, the presence of socks $c$, $d$, $e$, and $f$ is determined by the third, fourth, fifth, and sixth digits; see Figure~\ref{fig:sockslabeling}.

\begin{figure}[ht!]
\begin{center}
\includegraphics[scale=0.55]{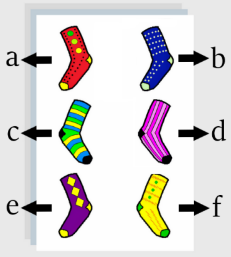}
\end{center}
\caption{Socks labeling.}
\label{fig:sockslabeling}
\end{figure}
To convert a 6-digit binary number into an EvenQuads card, we correspond the first two digits to the number, the next to digits to the color, and the last two digits to the shape. We describe the particular values in Table~\ref{table:quadslabeling}. 

%The shapes used in Quads differ from those used in the paper describing the game \cite{Rose,CragerEtAl}.

\begin{table}[ht!]
\begin{center}
\begin{tabular}{|c|c|c|c|}
\hline
     &First two digits & Middle two digits & Last two digits \\
\hline
     00 & 1 Symbol & Red Symbol & Square  \\
\hline
     01 & 2 Symbols & Green Symbol & Icosahedron  \\
\hline
     10 & 3 Symbols & Yellow Symbol & Circle  \\
\hline
     11 & 4 Symbols & Blue Symbol &  Spiral \\
\hline
\end{tabular}
\end{center}
\caption{Quad cards labeling.}
\label{table:quadslabeling}
\end{table}

We should keep in mind that the Socks deck has 63 cards, while the EvenQuads deck has 64 cards. We can assign to an empty Socks card the value zero. Then, the corresponding Quads card is One Red Square.

Consider an example. The card $011010$ can be visualized as an EvanQuads or Sock card. The Quads card is 2 Yellow Circles. The Socks card has blue, green, and purple socks. see Figure~\ref{fig:qscorr}.

\begin{figure}[ht!]
\begin{center}
\includegraphics[scale=0.4]{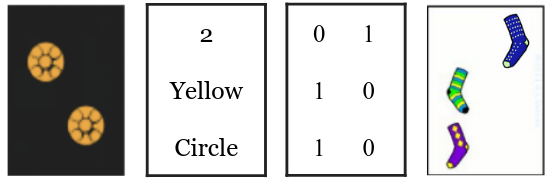}
\end{center}
\caption{An example of Quads-Socks correspondence.}
\label{fig:qscorr}
\end{figure}
We can now play Quads using Socks cards and Socks using Quads cards!

\subsection{How to play EvenQuads with the Socks deck}

We should add an empty card to the Socks deck to match the number of cards required. After that, a quad corresponds to four cards with matched socks. If one of the cards is empty, we are looking for matched socks with three cards. Thus, playing EvenQuads with a Socks deck is like playing Socks while looking for matched sets of sizes 3 and 4.

\subsection{How to play Socks with an EvenQuads deck}

We must set aside one of the sixty-four cards to be the origin (which is missing from the Socks deck). Let us call this card the \textit{origin card}. Thus, the card will have a value of 00 for each attribute (number, color, shape). For making sets of socks, we must have the used EvenQuad cards sum up to 00 in bitwise XOR for each attribute.

We can consider one attribute in quads. It corresponds to two particular socks, or equivalently, two coordinates in $\mathbb{Z}_2^6$.

Consider the following groupings of cards that, in a particular EvenQuads attribute, bitwise XOR to zero.
\begin{enumerate}
    \item \textbf{Single-zero.} A single card with the same value as the origin card, that is, with the value zero.
    \item \textbf{Double-same.} Two cards with the same value.
    \item \textbf{Triple-diff-nonzero.} A set of three distinct nonzero values of an attribute (all the colors, numbers, or shapes not in the origin card, as $10\oplus 01 \oplus 11=00$).
\end{enumerate}

\begin{proposition}
A set of EvenQuads cards corresponds to a matched set of socks set if and only if, for each attribute, we can partition the set of cards into groups that fall into one of the three categories: single-zero, double-same, and triple-diff-nonzero.
\end{proposition}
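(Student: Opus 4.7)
The plan is to reduce the proposition to a per-attribute statement and then carry out a small case analysis in $\mathbb{Z}_2^2$. The matched-socks condition is that the bitwise XOR of the full $\mathbb{Z}_2^6$ vectors of the chosen cards equals $\vec{0}$. Since XOR decomposes coordinate-by-coordinate, and since each EvenQuads attribute occupies exactly two coordinates of $\mathbb{Z}_2^6$, this is equivalent to requiring, for each attribute separately, that the XOR of the attribute's $\mathbb{Z}_2^2$-values over all chosen cards equals $00$. Hence it is enough to prove the following one-attribute claim: a multiset $M$ of elements of $\mathbb{Z}_2^2$ XORs to $00$ if and only if $M$ can be partitioned into blocks of the forms (i) $\{00\}$, (ii) $\{v,v\}$ for some $v\in\mathbb{Z}_2^2$, or (iii) $\{01,10,11\}$, since these are precisely the single-zero, double-same, and triple-diff-nonzero groups once a particular attribute is fixed.

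For the easy ($\Leftarrow$) direction, I would simply check that each of the three block types XORs to $00$: this is immediate for (i), holds for (ii) because $v\oplus v=00$, and holds for (iii) because $01\oplus 10\oplus 11=00$. Therefore any partition of $M$ into such blocks gives total XOR zero on that attribute, and doing this for every attribute yields a matched set.

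For the harder ($\Rightarrow$) direction, I would let $n_0,n_1,n_2,n_3$ count the multiplicities of $00,01,10,11$ in $M$. The XOR-zero condition says the high bit appears an even number of times and the low bit appears an even number of times, i.e.\ $n_2+n_3\equiv 0$ and $n_1+n_3\equiv 0 \pmod 2$. Subtracting, $n_1\equiv n_2\pmod 2$, and together this forces $n_1,n_2,n_3$ to share a common parity. If all three are even, I peel off each copy of $00$ as a single-zero block and pair the remaining values within themselves to form double-same blocks. If all three are odd, I first remove one card of each nonzero value to form a single triple-diff-nonzero block, after which the leftover multiplicities $n_1-1,n_2-1,n_3-1$ are all even and the previous case applies. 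Repeating the argument for each attribute (with possibly different partitions per attribute, as permitted by the statement) completes the proof.

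There is essentially no serious obstacle here; the only place that requires care is making sure one remembers that the partition is allowed to depend on the attribute, so the three per-attribute partitions need not agree. The entire content is the parity observation that in $\mathbb{Z}_2^2$ the minimal zero-sum multisets are exactly $\{00\}$, $\{v,v\}$, and $\{01,10,11\}$, and these are precisely the three group types named in the proposition.
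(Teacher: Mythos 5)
Your proof is correct and follows essentially the same route as the paper: reduce to a per-attribute statement in $\mathbb{Z}_2^2$, verify the easy direction by checking each group XORs to zero, and for the converse peel off zeros and matched pairs and observe that whatever remains must be the full triple $\{01,10,11\}$. The only difference is presentational --- you justify the last step with an explicit parity count of the multiplicities $n_1,n_2,n_3$, whereas the paper phrases it as a greedy algorithm and notes that the distinct nonzero leftovers can only XOR to zero if all three values are present.
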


\begin{proof}
For each group, the bitwise XOR of the values is zero for the given attribute. Therefore, the total bitwise XOR is also zero. Thus, the cards that can be partitioned into these groups for each attribute form a matched set of socks.

On the other hand, we can use the greedy algorithm to prove that a group of EvenQuads cards can be divided into the groups above. Suppose we have a set of cards that bitwise XOR to zero in every attribute. First, let us put aside all cards with a value of 0 in the given attribute. This is a single-zero pile. Now, we are only left with nonzero cards. Next, we can put aside all cards that are in pairs, aka double-same cards. If there are cards left, they have to be nonzero and distinct. Moreover, they have to bitwise XOR to zero. The only way this can happen is if they are all different. That is, they belong to a triple-diff-nonzero group.
\end{proof}

\begin{example}
\label{ex:7cards7quads}
Figure~\ref{fig:7cards7quads} shows seven cards that form a matched set. This set of matched socks is 3 double-sames and one single zero for color, shape, and number. These cards contain seven quads, which we encourage our readers to find.
\begin{figure}[ht!]
\begin{center}
\includegraphics[scale=0.5]{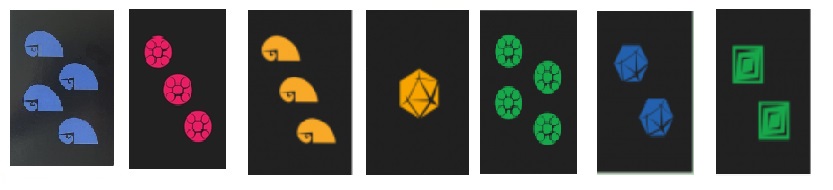}
\end{center}
\caption{Seven cards forming a matched set.}
\label{fig:7cards7quads}
\end{figure}
\end{example}

We now show how to recognize matched sets of socks in the EvenQuads deck. By Lemma~\ref{lemma:minimalsets}, we only need to study 3 to 7 cards. Recall that we assume the origin card is removed from the deck. Notice that two single-zeros are the same as one double-same, and two triple-diff-nonzero are the same as three double-sames. Thus, we can assume that we have no more than one single-zero and no more than one triple-diff-nonzero. Table~\ref{table:summaryquads2socks} shows all possibilities for a different number of cards. The number in the left column is the number of cards. The other columns represent possible combinations.

\begin{table}[ht!]
\begin{center}
\begin{tabular}{|c|c|c|c|}
\hline
Number of Cards&Single-Zero&Double-Same&Triple-Diff-Nonzero \\
\hline 
3&1&1&0 \\
\hline
3&0&0&1 \\
\hline 
4&1&0&1 \\
\hline 
4&0&2&0 \\
\hline 
5&1&2&0 \\
\hline 
5&0&1&1 \\
\hline 
6&1&1&1 \\
\hline 
6&0&3&0 \\
\hline 
7&0&2&1 \\
\hline
7&1&3&0 \\
\hline 
\end{tabular} 
\end{center}
\caption{Distributions for matched sets of EvenQuads cards.}
\label{table:summaryquads2socks}
\end{table}

For three or four cards, the description is especially concise. Three EvenQuads cards correspond to a set of matched socks if and only if adding the origin card turns this set into a quad. Four cards correspond to a set of matched socks if and only if they form a quad. If we have more cards, then to describe when they form a matched set, we need to look separately at each attribute:

\begin{itemize}
\item Five cards correspond to a set of matched socks if and only if adding the origin card turns this set into a quad and double-same for every attribute.
\item Six cards correspond to a set of matched socks if and only if they form a quad and double-same for every attribute.
\item Seven cards correspond to a set of matched socks if, for each attribute, we have two quads after adding the origin card to the set.
\end{itemize}

Playing Socks with the EvenQuads deck is difficult, as we have to choose a random card to be the origin card and keep track of it. Also, the matched set might include up to 7 cards, making it even more difficult. Also, the description of how to find matched sets is complicated. 

However, we played the original Socks game using the EvenQuads deck, which was relatively easy. In the original game, we had to find three cards with matched sets of socks. That means we were looking for three cards that can be completed into a quad by adding the origin card. We quite enjoyed this game.

Playing Socks with the EvenQuads deck wasn't only fun; it was also very educational. It helped us understand the difference between vector spaces and affine spaces. In an affine space, all vectors are equivalent, and there is no origin. That means all the cards are equivalent.

Studying all these games was educational too. It prepared us for linear algebra.

\section{Acknowledgments}

We are grateful to MIT PRIMES STEP and its director, Slava Gerovitch, for giving us the opportunity to conduct this research.

\end{document}